\theoremstyle{plain}
\newtheorem{thm}{Theorem}
\newtheorem{lem}[thm]{Lemma}
\newtheorem{cor}[thm]{Corollary}
\theoremstyle{definition}
\newtheorem*{dfn}{Definition}
\newcommand{\diag}{\operatorname{diag}}
\newcommand{\krank}{{K-rank\ }}
\begin{document}

\title{A concise proof of Kruskal's theorem on tensor decomposition}
\date{\today}

\author{John A. Rhodes}
\address{Department of Mathematics and Statistics\\University of Alaska Fairbanks\\PO Box 756660\\ Fairbanks, AK 99775}
\email{j.rhodes@uaf.edu}
\subjclass{Primary 15A69; Secondary 15A72,15A18}
\keywords{tensor rank, candecomp, parafac}
\thanks{The author thanks the National Science Foundation, for support from NSF grant DMS
0714830, and the Statistical and Applied Mathematical Sciences Institute, for support during a residency in which this work had its origins.}

\maketitle

\begin{abstract}
A theorem of J.~Kruskal from 1977, motivated by a latent-class statistical model, established that under certain explicit conditions the expression of a 3-dimensional tensor as the sum of rank-1 tensors is essentially unique. We give a new proof of this fundamental result, which is substantially shorter than both the original one and recent versions along the original lines.
\end{abstract}

\section{Introduction} 

In \cite{MR0444690}, J.~Kruskal proved that, under certain explicit conditions, the expression of a 3-dimensional tensor (\emph{i.e.}, a 3-way array) of rank $r$ as a sum of $r$  tensors of rank 1 is unique, up to permutation of the summands. (See also \cite{MR1088949,MR0488592}.)  This result contrasts sharply with the
well-known non-uniqueness of  expressions of matrices of rank at least 2 as sums of rank-1 matrices. The uniqueness of this tensor decomposition is moreover of fundamental interest for a number of applications, ranging from Kruskal's original motivation by latent-class models used in psychometrics, to chemistry and signal processing, as mentioned in \cite{StegSid} and its references. In these fields, the expression of a tensor as a sum of rank-1 tensors is often referred to as  the Candecomp or Parafac decomposition. Recently, Kruskal's theorem has been used as a general tool for investigating the identifiability of a wide variety of statistical models with hidden variables \cite{AMR, ARcov}.

As noted in \cite{StegSid}, Kruskal's original proof was ``rather inaccessible,'' leading a number of authors to work toward a shorter and more intuitive presentation. This thread, which continued to follow the basic outline of Kruskal's approach in which his `Permutation Lemma' plays a key role, culminated in the proof given in \cite{StegSid}. In this paper, we present a new and more concise proof of Kruskal's theorem, Theorem \ref{thm:K} below, that follows an entirely different approach. While the resulting theorem is identical, the alternative argument given here offers a new perspective on the role of Kruskal's explicit condition ensuring uniqueness.

While Kruskal's theorem gives a sufficient condition for uniqueness of a decomposition, the condition is known in general not to be necessary. Of particular note are recent independent works of De Lathauwer \cite{DeLat} and Jiang and Sidiropoulos \cite{JiaSid}, which give a different, though in some ways more narrow, criterion that can ensure uniqueness. See also \cite{StetBerDeLat} for the connection between these works.

It would, of course,  be highly desirable to obtain conditions (more involved than Kruskal's) that would ensure the essential uniqueness of the expression of a rank $r$ tensor as a sum of rank-1 tensors under a wider range of assumptions on the size and rank of the tensor.  Note that both Kruskal's condition and that of \cite{DeLat,JiaSid} can be phrased algebraically, in terms of the non-vanishing of certain polynomials in the variables of a natural parameterization of rank $r$ tensors. This algebraic formulation allows one
to conclude that \emph{generic} rank $r$ tensors of certain sizes have unique decompositions. Having explicit understanding of these polynomial conditions is essential for certain applications, such as in \cite{AMR}. The general problem of determining for which sizes and ranks of generic tensors the decomposition is essentially unique, and what explicit algebraic conditions can ensure uniqueness, remains open. 

\section{Notation}

Throughout, we work over an arbitrary field.

For a matrix such as $M_k$, we use $\mathbf m_j^k$ to denote the $j$th column, $\bar{\mathbf m}_i^k$ to denote the $i$th row, and
$m^k_{ij}$ the $(i,j)$th entry. We use $\langle S\rangle$ to denote the span of a set of vectors $S$. With $[r]=\{1,2,3,\dots,r\}$, we denote by $\mathfrak S_r$ the symmetric group on $[r]$.

Given matrices $M_l$ of size $ s_l\times r$, the matrix triple product $[M_1, M_2,M_3]$ is an $s_1\times s_2\times s_3$ tensor defined as a sum of
$r$ rank-1 tensors by
$$[M_1, M_2,M_3]= \sum_{i=1}^r \mathbf m^1_i\otimes \mathbf m^2_i \otimes \mathbf m^3_i,$$
so
$$[M_1, M_2,M_3](j,k,l)= \sum_{i=1}^r m^1_{ji} m^2_{ki}m^3_{li}.$$

A matrix $A$ of size $t\times s_l$ acts on an $s_1\times s_2\times s_3$ tensor $T$ `in the $l$th coordinate.' For example, with $l=1$
$$(A*_1T)(i,j,k)=\sum_{n=1}^{s_1} a_{in}T(n,j,k),$$
so that $A*_1T$ is of size $t\times s_2\times s_3$.
One then easily checks that
$$A*_1[M_1,M_2,M_3]=[AM_1,M_2,M_3],$$
with similar formulas applying for actions in other coordinates.

\begin{dfn}The \emph{Kruskal rank}, or \emph{K-rank}, of a matrix is the largest number $j$ such that \emph{every} set of $j$ columns is independent.
\end{dfn}

\begin{dfn} We say a triple of matrices $(M_1,M_2,M_3)$ is \emph{of type $(r;a_1,a_2,a_3)$} if each $M_i$ has $r$ columns and the \krank of $M_i$ is at least $r-a_i$. \end{dfn}

In a slight abuse of notation, we will say a product $[M_1,M_2,M_3]$ is of type $(r;a_1,a_2,a_3)$ when the triple $(M_1,M_2,M_3)$ is of that type.

\smallskip

Note that with this definition,  type $(r;a_1,a_2,a_3)$ implies  type $(r;b_1,b_2,b_3)$  as long as $a_i\le b_i$ for each $i$. 
Thus $a_i$ is a bound on the gap between the \krank of the matrix $M_i$ and the number $r$ of its columns. Intuitively, when the $a_i$ are small it should be easier to identify the $M_i$ from the product $[M_1,M_2,M_3]$. 

We will not need to be explicit about the number of rows in any of the $M_i$, though  type $(r;a_1,a_2,a_3)$ of course implies $M_i$ has at least $r-a_i$ rows

\medskip

 \section{The proof}
 
We begin by establishing a lemma that generalizes a basic insight that has been rediscovered many times over the years, in which matrix diagonalizations
arising from matrix slices of a 3-dimensional tensor are used to understand the tensor decomposition.
A few such instances of the appearance of this idea include \cite{MR97k:92011,KasShi}, and other such references are mentioned in \cite{DeLatMV} where the idea is exploited for computational purposes.

\begin{lem} \label{lem:eig} Suppose $(M_1,M_2,M_3)$ is of type $(r; 0,0,r-1)$;  $N_1,N_2,N_3$ are matrices with $r$ columns; and $[M_1,M_2,M_3]=[N_1,N_2,N_3]$. Then there is some permutation
$\sigma\in \mathfrak S_r$ such that the following holds:

Let $ \mathcal I\subseteq[r]$ be any maximal subset (with respect to inclusion) of indices with the property that $\langle \{\mathbf m^3_i\}_{i\in \mathcal I}\rangle$ is 1-dimensional. Then 
\begin{enumerate}
\item\label{enum:lem1}
$\langle \{\mathbf m^j_i\}_{i\in \mathcal I}\rangle =\langle \{\mathbf n^j_{\sigma(i)}\}_{i\in \mathcal I}\rangle$, for $j=1,2,3$
and 
\item \label{enum:lem2}
$\mathcal I$ is also maximal for the property that $\langle \{\mathbf n^3_{\sigma(i)}\}_{i\in \mathcal I}\rangle$ is 1-dimensional.
\end{enumerate}
\end{lem}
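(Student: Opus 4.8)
The plan is to reduce the tensor identity to a family of matrix identities by contracting in the third coordinate, and then to recover the matching of the parallel classes of the columns of $M_3$ and $N_3$ from a single simultaneous conjugation of diagonal matrices. Since K-rank and equality/dimension of spans are detected over any field extension (linear dependence of vectors with entries in $K$ is unchanged over any extension of $K$), I would first extend scalars to an infinite field, prove the statement there, and descend. Over an infinite field, contracting $[M_1,M_2,M_3]=[N_1,N_2,N_3]$ against an arbitrary row vector $w$ of size $1\times s_3$ and using $w*_3[M_1,M_2,M_3]=[M_1,M_2,wM_3]$, which as an $s_1\times s_2$ matrix equals $M_1\diag(wM_3)M_2^{T}$, yields
$$M_1\diag(wM_3)M_2^{T}=N_1\diag(wN_3)N_2^{T}\qquad\text{for all }w.$$
Because $M_1,M_2$ have \krank $r$ they have full column rank $r$; choosing $w_0$ off the finitely many hyperplanes $\{w:w\mathbf m^3_i=0\}$ (each $\mathbf m^3_i\ne 0$, and the field is infinite) makes $\diag(w_0M_3)$ invertible, so the left-hand side has rank $r$. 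This forces $N_1,N_2$ to have full column rank $r$, forces $\diag(w_0N_3)$ invertible, and shows $N_1,N_2$ share the column spaces of $M_1,M_2$.

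Next I would strip off the outer factors. With $M_1^{+}$ a left inverse of $M_1$ and $R$ a right inverse of $M_2^{T}$, left-multiplying by $M_1^{+}$ and right-multiplying by $R$ gives, for all $w$,
$$\diag(wM_3)=P\,\diag(wN_3)\,Q,\qquad P=M_1^{+}N_1,\quad Q=N_2^{T}R.$$
Specializing to $w_0$ shows $P,Q$ are invertible with $Q=\diag(w_0N_3)^{-1}P^{-1}\diag(w_0M_3)$; substituting back converts the whole family into the single conjugation $\Lambda(w)\,P=P\,\Gamma(w)$, where $\Lambda(w)=\diag(wM_3)\diag(w_0M_3)^{-1}$ has diagonal entries $\lambda_i=(w\mathbf m^3_i)/(w_0\mathbf m^3_i)$ and $\Gamma(w)$ has entries $\gamma_j=(w\mathbf n^3_j)/(w_0\mathbf n^3_j)$. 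Reading the $(i,j)$ entry gives $p_{ij}(\lambda_i-\gamma_j)=0$ for all $w$, so $p_{ij}\ne 0$ forces the linear forms $w\mapsto w\mathbf m^3_i$ and $w\mapsto w\mathbf n^3_j$ to be proportional, i.e.\ $\mathbf m^3_i$ and $\mathbf n^3_j$ are parallel.

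Finally I would run the combinatorial step. Grouping $[r]$ by the direction of the third-coordinate columns produces the maximal parallel classes $\{I_d\}$ for $M_3$ and $\{J_d\}$ for $N_3$ (each $\mathbf m^3_i,\mathbf n^3_j$ is nonzero since these matrices have \krank at least $1$), and the previous step says $P$ is supported on pairs with $\mathbf m^3_i\parallel\mathbf n^3_j$, hence is block-diagonal with blocks indexed by common directions. As $P$ is invertible, each block is square and invertible, so the classes biject direction-by-direction with $|I_d|=|J_d|$; defining $\sigma\in\mathfrak S_r$ to carry each $I_d$ onto the corresponding $J_d$ immediately gives conclusion (2) and conclusion (1) for $j=3$. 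For $j=1$ I would use $N_1=M_1P$ (legitimate since the column spaces agree) together with the block-diagonality of $P$ and of $P^{-1}$ to obtain $\langle\{\mathbf n^1_j\}_{j\in J_d}\rangle=\langle\{\mathbf m^1_i\}_{i\in I_d}\rangle$; the case $j=2$ follows identically from $N_2=M_2Q^{T}$, with $Q^{T}$ sharing the same block structure.

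I expect the main obstacle to be organizing this last step cleanly: passing from the entrywise support condition on the \emph{invertible} matrix $P$ to the block-diagonal decomposition, the equality $|I_d|=|J_d|$, and the class bijection, and then verifying that one permutation $\sigma$ simultaneously delivers all three span equalities for every choice of maximal $\mathcal I$. A secondary point requiring care is justifying the passage to an infinite extension field and the descent of each conclusion back to the original field.
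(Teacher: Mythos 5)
Your proposal is correct and is essentially the paper's argument in different clothing: the family $\Lambda(w)P=P\,\Gamma(w)$ is exactly the paper's commuting family $S_iA^{-1}=N_1\diag(\bar{\mathbf n}^3_i)\diag(\mathbf c^TN_3)^{-1}N_1^{-1}$ (take $w=\mathbf e_i^T$, $w_0=\mathbf c^T$, and note $P=M_1^{+}N_1$ becomes $N_1$ after the paper's normalization $M_1=M_2=I_r$), and your support/block-diagonality analysis of the invertible intertwiner $P$ is an entrywise reformulation of the paper's simultaneous-eigenspace argument, yielding the same global $\sigma$ from the bijection of parallel classes. The one genuine addition is your extension to an infinite field before choosing $w_0$: the paper simply asserts that a vector $\mathbf c$ non-orthogonal to every column of $M_3$ exists, which can fail over a small finite field, so your extension-and-descent step (legitimate, since linear independence and span equalities are insensitive to field extension) actually closes a small gap rather than being mere pedantry.
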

\begin{proof} That $(M_1,M_2,M_3)$ is of type $(r; 0,0,r-1)$  means $M_1,M_2$ have full column rank, and $M_3$ has no zero columns.

Choose some vector $\mathbf c$ that is not orthogonal to any of the columns of $M_3$, so that $\mathbf c^TM_3$ has no zero entries. Then
$$A=\mathbf c^T*_3[M_1,M_2,M_3]= [M_1,M_2,\mathbf c^TM_3]=M_1\diag(\mathbf c^TM_3)M_2^T$$
is a matrix of rank $r$. Since
$$ A=\mathbf c^T*_3[N_1,N_2,N_3]= [N_1,N_2,\mathbf c^TN_3]=N_1\diag(\mathbf c^TN_3) N_2^T,$$ 
$N_1$ and $N_2$ must also have rank $r$, and $\mathbf c^TN_3$ has no zero entries. These two expressions for $A$ also show that the span of the columns of $M_j$ is the same as that of the columns of $N_j$ for $j=1,2$. Expressing the columns of $M_j$ and $N_j$ in terms of a basis given by the columns of $M_j$, we may henceforth assume
$M_1=M_2= I_r$, the $r\times r$ identity, and $N_1,N_2$ are invertible. Thus $A=\diag(\mathbf c^TM_3)$.

Now let $S_i$ denote the slice of $[M_1,M_2,M_3]=[N_1,N_2,N_3]$ with fixed third coordinate $i$, so $S_i$ is an $r\times r$ matrix. 
Recalling that $\bar {\mathbf  m}^j_i$ and
$\bar{\mathbf n}^j_i$ denote the $i$th rows of $M_j$ and $N_j$, we have
$$S_i=\diag(\bar {\mathbf m}^3_i)=N_1\diag(\bar {\mathbf n}^3_i)N_2^T.$$
Note the matrices 
$$S_iA^{-1}=\diag(\bar {\mathbf  m}_i^3) \diag(\mathbf c^TM_3)^{-1} =N_1\diag(\bar {\mathbf  n}^3_i)\diag(\mathbf c^TN_3)^{-1} N_1^{-1},$$ for various choices of $i$, commute. Thus  their (right) simultaneous eigenspaces are determined. But from the two expressions for $S_iA^{-1}$ we see its $\alpha$-eigenspace is spanned by the set
\begin{equation*}\{\mathbf e_j=\mathbf m_j^1 ~|~m^3_{i,j}/ (\mathbf c^T\mathbf m^3_j) =\alpha\},\label{eq:m3}\end{equation*} and also by  the set
\begin{equation*}\{\mathbf n^1_j~|~ n^3_{i,j}/(\mathbf c^T\mathbf n^3_j) = \alpha\}.\label{eq:n3}\end{equation*}

A simultaneous eigenspace for the $S_iA^{-1}$ is thus spanned by the set $\{\mathbf e_j\}_{j\in \mathcal I}$ where $\mathcal I$ is a maximal set of indices with the property that if $j,k\in \mathcal I$, then 
\begin{equation}m_{i,j}^3/(\mathbf c^T\mathbf m^3_j) =m_{i,k}^3/(\mathbf c^T\mathbf m^3_k), \text{ for all $i$}.\notag\end{equation}
This condition is equivalent to  $ \mathbf m_j^3$ and $\mathbf m_k^3$ being scalar multiples of one another. Such a set $\mathcal I$ is therefore exactly of the sort described in the statement of the lemma. As the simultaneous eigenspaces are also spanned by similar sets defined in terms of the columns of $N_1$,  one may choose a permutation $\sigma$ so that claim \ref{enum:lem2} holds,  as well as  claim \ref{enum:lem1} for $j=1$. 

The case $j=2$ of claim 1 is similarly proved using the transposes of $A$ and the $S_i$. As the needed permutation of the columns of the $N_j$ in the two cases of $j=1,2$
is dependent only on the maximal sets $\mathcal I$,  a common $\sigma$ may be chosen. Finally, the case $j=3$ follows from  equating eigenvalues in the two expressions giving diagonalizations for $S_iA^{-1}$, to see that for all $i$
$$m^3_{i,j}/\mathbf c^T\mathbf m^3_j=n^3_{i,\sigma(j)}/\mathbf c^T \mathbf n^3_{\sigma(j)},$$
so $\mathbf m^3_j$ and $\mathbf n^3_{\sigma(j)}$ are scalar multiples of one another.
\end{proof}

This lemma quickly yields a special case of Kruskal's theorem, when two of the matrices in the product are asumed to have full column rank.

\begin{cor} \label{cor:eig} Suppose $(M_1,M_2,M_3)$ is of type $(r;0,0,r-2)$;  $N_1,N_2,N_3$ are matrices with $r$ columns; and $[M_1,M_2,M_3]=[N_1,N_2, N_3]$.
Then there exists some permutation matrix $P$ and invertible diagonal matrices $D_i$ with $D_1D_2D_3=I_r$ such that $N_i=M_iD_iP$.
\end{cor}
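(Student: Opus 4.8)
The plan is to apply Lemma \ref{lem:eig}, whose hypotheses hold because type $(r;0,0,r-2)$ implies type $(r;0,0,r-1)$; this produces a permutation $\sigma\in\mathfrak S_r$. The decisive point is that the sharper bound on $M_3$ trivializes the index sets appearing in the lemma. Indeed, type $(r;0,0,r-2)$ says the \krank of $M_3$ is at least $2$, so every two columns of $M_3$ are independent: none is zero and no two are proportional. A set $\mathcal I$ with $\langle\{\mathbf m^3_i\}_{i\in\mathcal I}\rangle$ one-dimensional can therefore contain only one index, while any singleton qualifies, so the maximal such $\mathcal I$ are exactly the singletons $\{i\}$, $i\in[r]$.

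Next I would run the lemma once for each singleton. Part (\ref{enum:lem1}) then gives $\langle\mathbf m^j_i\rangle=\langle\mathbf n^j_{\sigma(i)}\rangle$ for $j=1,2,3$ and every $i$. Since $M_1,M_2$ have full column rank and $M_3$ has no zero column, each $\mathbf m^j_i$ is nonzero, so this common span is one-dimensional and forces $\mathbf n^j_{\sigma(i)}=d^j_i\,\mathbf m^j_i$ for some nonzero scalar $d^j_i$. Writing $P$ for the permutation matrix carrying column $i$ to column $\sigma(i)$ and $D_j=\diag(d^j_1,\dots,d^j_r)$, these column identities are precisely $N_j=M_jD_jP$ with each $D_j$ invertible.

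It then remains only to verify $D_1D_2D_3=I_r$, i.e. $d^1_id^2_id^3_i=1$ for all $i$. Substituting $\mathbf n^j_{\sigma(i)}=d^j_i\mathbf m^j_i$ into $[N_1,N_2,N_3]=\sum_k\mathbf n^1_k\otimes\mathbf n^2_k\otimes\mathbf n^3_k$ and reindexing by $k=\sigma(i)$ rewrites the product as $\sum_i d^1_id^2_id^3_i\,\mathbf m^1_i\otimes\mathbf m^2_i\otimes\mathbf m^3_i$; comparing with $[M_1,M_2,M_3]$ gives $\sum_i(d^1_id^2_id^3_i-1)\,\mathbf m^1_i\otimes\mathbf m^2_i\otimes\mathbf m^3_i=0$. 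I would close by noting the $r$ summand tensors are linearly independent: the products $\mathbf m^1_i\otimes\mathbf m^2_i$ are independent because $\{\mathbf m^1_i\}$ and $\{\mathbf m^2_i\}$ are, and contracting the relation against a functional on the third factor not annihilating the nonzero $\mathbf m^3_i$ kills every coefficient but the $i$th, forcing $d^1_id^2_id^3_i=1$.

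The only real idea is the singleton observation of the first paragraph: once the improved \krank bound on $M_3$ collapses the simultaneous-eigenspace groupings of Lemma \ref{lem:eig} down to individual columns, everything else is bookkeeping. The sole points demanding a little care are the nonvanishing of the scalars $d^j_i$ (so that the $D_j$ are genuinely invertible) and the independence of the rank-1 tensors $\mathbf m^1_i\otimes\mathbf m^2_i\otimes\mathbf m^3_i$, which is what pins down the product $D_1D_2D_3$.
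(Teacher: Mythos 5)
Your proof is correct and takes essentially the same approach as the paper: both apply Lemma \ref{lem:eig} and observe that K-rank at least $2$ for $M_3$ forces the maximal sets $\mathcal I$ to be singletons, so each column of $N_j$ is a nonzero scalar multiple of the corresponding column of $M_j$. The only (harmless) divergence is in the last step: the paper obtains $D_1,D_2$ from the eigenspace data, solves the triple-product identity for $N_3$, and defines $D_3=(D_1D_2)^{-1}$, whereas you read off all three scalings from part (\ref{enum:lem1}) of the lemma and then verify $d^1_id^2_id^3_i=1$ using linear independence of the rank-one summands.
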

\begin{proof}
Since $(M_1,M_2,M_3)$ is also of type $(r;0,0,r-1)$, we may apply Lemma \ref{lem:eig}. 
As in the proof of that lemma, we may also assume $M_1=M_2=I_r$.
But $M_3$ has \krank at least 2, so every pair of columns is independent. Thus the maximal sets of indices in Lemma \ref{lem:eig} are all singletons. Thus with $P$ acting to permute columns by $\sigma$, the one-dimensionality of all eigenspaces shows there is a permutation $P$ and invertible diagonal matrices $D_1,D_2$ with $N_i=M_iD_iP=D_iP$ for $j=1,2$.

Thus $[M_1,M_2,M_3]=[N_1,N_2,N_3]$ implies $$[I_r,I_r, M_3]=[D_1P,D_2P, N_3]=[D_1,D_2,N_3P^T]=[I_r,I_r,N_3 P^T D_1D_2],$$ which shows
$M_3=N_3P^TD_1D_2 $. Setting $D_3=(D_1D_2)^{-1}$, we find $N_3=M_3D_3P$.
\end{proof}

We now use the lemma to give a new proof of  Kruskal's Theorem in its full generality. Note that the condition on the $a_i$ stated in the following theorem is equivalent to Kruskal's condition in \cite{MR0444690} that $(r-a_1)+(r-a_2)+(r-a_3)\ge 2r+2$.

\begin{thm} [Kruskal, \cite{MR0444690}]\label{thm:K}
Suppose $(M_1,M_2,M_3)$ is of type $(r;a_1,a_2,a_3)$ with  $a_1+a_2+a_3\le r-2$; $N_1, N_2,N_3$ are matrices with $r$ columns, and $[M_1,M_2,M_3]=[N_1,N_2, N_3]$. Then there exists some permutation matrix $P$ and invertible diagonal matrices $D_i$ with $D_1D_2D_3=I_r$ such that $N_i=M_iD_iP$.
\end{thm}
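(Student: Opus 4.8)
The plan is to reduce the general Kruskal theorem to the two-full-rank special case already proved in Corollary~\ref{cor:eig}. The key difficulty is that now none of the three matrices $M_i$ need have full column rank---we only know a lower bound on their K-ranks, with the gaps $a_i$ summing to at most $r-2$. The strategy I would pursue is to \emph{restrict attention to a well-chosen subset of the $r$ columns}, on which two of the factors do become full-rank, apply the corollary there, and then argue that the correspondence of columns forced on this subset propagates to a global matching. This matches the spirit of the paper, which isolates the eigenvalue/diagonalization argument in the $(r;0,0,r-1)$ case and then bootstraps.

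\medskip

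Concretely, the first step would be to pass to slices, as in the lemma. Pick a generic covector $\mathbf c$ and form the pencil of matrices $S_i A^{-1}$ from the tensor; but since $M_1,M_2$ no longer have full column rank, I cannot directly diagonalize. Instead I would choose index subsets where full rank is recovered: because the K-rank of $M_1$ is at least $r-a_1$, \emph{every} set of $r-a_1$ columns of $M_1$ is independent, and similarly for $M_2$. The condition $a_1+a_2+a_3\le r-2$ is what guarantees these ``large independent subsets'' overlap enough to be useful. So the second step is a combinatorial/linear-algebra lemma establishing that one can find a subset $J\subseteq[r]$, together with a corresponding subset of the $N$-columns, on which the restricted triple is of type covered by Corollary~\ref{cor:eig}, forcing $\mathbf n^j_{\sigma(i)}$ to be a scalar multiple of $\mathbf m^j_i$ for indices $i$ in $J$.

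\medskip

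The third and most delicate step---\textbf{and this is where I expect the main obstacle to lie}---is promoting these \emph{local} proportionalities into a single global permutation $\sigma$ valid for \emph{all} $r$ columns simultaneously, consistently across $j=1,2,3$. The danger is that different choices of the restriction subset $J$ yield different pairings $\sigma$, and one must show they glue into one permutation. Here Lemma~\ref{lem:eig} is the crucial tool: it already produces, for the weaker type $(r;0,0,r-1)$, a single $\sigma$ matching the maximal ``parallel'' sets $\mathcal I$ of columns of $M_3$ with those of $N_3$, and guarantees the spans agree in all three coordinates. I would apply the lemma after a symmetric argument handling the K-rank deficiency in all three factors (not just the third), using the freedom to permute which coordinate plays the distinguished role. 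The count $a_1+a_2+a_3\le r-2$ should be exactly the inequality that makes the overlapping-subset argument close: it ensures that enough independent columns survive in at least two factors at once to pin down each column's image under $\sigma$ uniquely.

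\medskip

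Finally, once $\sigma$ is fixed and each $\mathbf n^j_{\sigma(i)}$ is known to be a nonzero scalar multiple $d^j_i\,\mathbf m^j_i$, the relation $[M_1,M_2,M_3]=[N_1,N_2,N_3]$ forces $d^1_i d^2_i d^3_i=1$ for each $i$, exactly as in the corollary's final computation. Collecting the scalars into diagonal matrices $D_i=\diag(d^i_1,\dots,d^i_r)$ and the pairing into a permutation matrix $P$ yields $N_i=M_iD_iP$ with $D_1D_2D_3=I_r$, completing the proof. The routine verification that the scalar triples multiply to one, and the bookkeeping of $P$ versus $P^T$, I would defer to a short direct calculation mirroring Corollary~\ref{cor:eig}.
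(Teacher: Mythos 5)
Your outline correctly identifies the overall shape of the argument (reduce the K-rank-deficient case to the situation handled by Lemma~\ref{lem:eig} and Corollary~\ref{cor:eig}), but as written it has genuine gaps at precisely the two points you yourself flag as the hard ones, and neither is resolved. First, ``restricting attention to a well-chosen subset of the $r$ columns'' is not an operation you can perform on the tensor equation $[M_1,M_2,M_3]=[N_1,N_2,N_3]$: the tensor is a sum over all $r$ rank-one terms, and you have no a priori pairing between the $M$-columns and the $N$-columns, so you cannot simply discard matching summands from both sides. The paper's mechanism for this is the essential missing ingredient: one multiplies by projections $\Pi_1,\Pi_2,\Pi_3$ in the three coordinates, choosing the nullspace of $\Pi_3$ to be spanned by columns of $N_3$ while the nullspaces of $\Pi_1,\Pi_2$ are spanned by columns of $M_1,M_2$. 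This asymmetric choice is what zeroes out columns on \emph{both} sides of the equation simultaneously, producing a smaller triple product of type $(r-a_1-a_2;0,0,\ast)$ to which Lemma~\ref{lem:eig} applies. Your ``combinatorial/linear-algebra lemma'' in step two is exactly this construction, and it is asserted rather than supplied.

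Second, your step three (gluing local proportionalities into a global permutation) is where the paper does its most delicate work, and your proposal only says that Lemma~\ref{lem:eig} ``is the crucial tool'' without carrying out the argument. In the paper, the output of Lemma~\ref{lem:eig} applied to the projected product gives only equalities of spans of \emph{projected} columns, contaminated by contributions from the nullspaces of $\Pi_1,\Pi_2$; removing that contamination requires varying the index sets $\mathcal S_1,\mathcal S_2$ (swapping a $p\in\mathcal S_1$ with a $q\in\mathcal S_2$) and comparing the resulting expansions to kill the extra coefficients. Moreover, the paper does not then conclude directly: having found one small index set $\mathcal J$ with $\langle\{\mathbf m^i_j\}_{j\in\mathcal J}\rangle=\langle\{\mathbf n^i_{\sigma(j)}\}_{j\in\mathcal J}\rangle$, it projects that span away and invokes an \emph{induction on $r$}, which your proposal omits entirely in favor of a direct appeal to Corollary~\ref{cor:eig}. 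Without the induction (or some substitute), knowing that the columns match on one subset does not give you the matching on the remaining columns. So while your plan points in the right direction and correctly locates the role of the inequality $a_1+a_2+a_3\le r-2$, the proof as proposed is incomplete at its two load-bearing steps.
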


\begin{proof} We need only consider $a_1+a_2+a_3=r-2$. 
We  proceed by induction on $r$, with the case $r=2$ (and $3$) already established by Corollary \ref{cor:eig}. We may also assume $a_1\le a_2\le a_3$,
We may furthermore restrict to $a_2\ge 1$, since the case $a_1=a_2=0$ is established by Corollary \ref{cor:eig}.

\ 

We first claim that it will be enough to show that, for some $1\le i\le 3$, there is some set of indices $\mathcal J\subset [r]$, $1\le |\mathcal J |\le r-a_i-2$, and a permutation $\sigma\in \mathfrak S_r$ such that
\begin{equation}\langle \{\mathbf m^i_j\}_{j\in \mathcal J}\rangle=\langle \{\mathbf n^i_{\sigma(j)} \}_{j\in \mathcal J}\rangle.
\label{eq:spanmn}
\end{equation}

To see this, if there is such a set $\mathcal J$,  assume for convenience $i=1$ (the cases $i=2,3$ are similar), and the columns of $M_i,N_i$ have been reordered so that $\sigma=id$ and
$\mathcal J=[s].$
Let  $\Pi$ be a matrix with nullspace the span described in equation \eqref{eq:spanmn}. Then
$$[\Pi M_1,M_2,M_3]=\Pi*_1[M_1,M_2,M_3]=\Pi*_1[ N_1,N_2,N_3]=[\Pi N_1,N_2,N_3].$$
But since the first $s$ columns of $\Pi M_1$ and $\Pi N_1$ are zero, these triple products can be expressed as
triple products of matrices with only $r-s$ columns. That is, using the symbol `$\,\widetilde \  \,$' to denote deletion of the first $s$ columns,
$$[\Pi \widetilde M_1,\widetilde M_2,\widetilde M_3]=[\Pi \widetilde  N_1,\widetilde N_2,\widetilde N_3].$$

For $i=2,3$, since $M_i$ has \krank $\ge r-a_i$, the matrix $\widetilde M_i$ has \krank $\ge \min (r-a_i,r-s)$. Since the nullspace of $\Pi$ is spanned by the first $s$ columns of $M_1$, and $M_1$ has \krank $\ge r-a_1$, ones sees that $\Pi \widetilde M_1$ has \krank $\ge r-s- a_1$, as follows:
For any set of $r-s-a_1$ columns of $\Pi \widetilde M_1$, consider the corresponding columns of $M_1$, together with the first $s$ columns. This set of $r-a_1$ columns of $M_1$ is therefore independent, so the span of its image under $\Pi$ is of dimension $r-s-a_1$. This span must then have as a basis the chosen set of $r-s-a_1$ columns of $\Pi \widetilde M_1$, which are therefore independent.
Thus $[\Pi \widetilde M_1,\widetilde M_2,\widetilde M_3]$ is of type $(r-s;a_1,b_2,b_3)$, where $b_i=\max(0, a_i-s)$ for $i=2,3$.  Note also that $s\le r-a_1-2$ implies
$a_1+b_2+b_3\le r-s-2$.

We may thus apply the inductive hypothesis to $[\Pi \widetilde M_1,\widetilde M_2,\widetilde M_3]=[\Pi \widetilde  N_1,\widetilde N_2,\widetilde N_3],$
and, after an allowed permutation and scalar multiplication of the columns of the $N_i$,  conclude that $\widetilde M_i=\widetilde N_i$ for $i=2,3$.
But this means we can now take the set $\mathcal J$ described in equation \eqref{eq:spanmn} to be a singleton set $\{j\}$, with $j>s$, and $i=2$.  Again applying the argument developed thus far implies that, allowing for a possible permutation and rescaling, all but the $j$th columns of $M_3$ and $N_3$ are identical. As $\mathbf m^3_j=\mathbf n^3_j$, this shows $M_3=N_3$.  Applying this argument yet again, with  $i=3$, and varying  choices of $j$, then shows $M_1=N_1$ and $M_2=N_2$, up to the allowed permutation and rescaling. The claim is thus established.

\medskip

We next argue that some set of columns of some $M_i$, $N_i$ meets the hypotheses of the above claim.

Let $\Pi_3$ be any matrix with nullspace $\langle \{ \mathbf n^3_i\}_{1\le i\le a_1+a_2}\rangle$, spanned by the first $a_1+a_2$ columns of $N_3$. 
 Let $\mathcal Z$ be the set of indices of all zero columns of $\Pi_3M_3$.
Since every set of $r-a_3=a_1+a_2+2$ columns of $M_3$ is independent, $| \mathcal Z|\le a_1+a_2$.
Note also that at least 2 columns of $\Pi_3M_3$ are independent, since the span of any $a_1+a_2+2$ columns of $\Pi_3 M_3$ is at least 2 dimensional. 

Let $\mathcal S_1,\mathcal S_2$ be any disjoint subsets of $[r]$ such that $|\mathcal S_1|=a_2$, $|\mathcal S_2|=a_1$, $\mathcal Z\subseteq \mathcal S_1\cup \mathcal S_2=\mathcal S$, and $\mathcal S$ excludes at least two indices of independent columns of $\Pi_3M_3$. Let $\Pi_1=\Pi_1(\mathcal S_1) $ be any matrix with nullspace $\langle \{ \mathbf m^1_i\}_{i\in \mathcal S_ 1}\rangle$, and  let $\Pi_2=\Pi_2(\mathcal S_2)$ be any matrix with nullspace $\langle\mathbf  \{\mathbf m^2_i\}_{i\in \mathcal S_2}\rangle$.

Now consider 
\begin{multline*}[\Pi_1M_1,\Pi_2 M_2,\Pi_3 M_3] =\Pi_3*_3(\Pi_2*_2(\Pi_1*_1[M_1,M_2,M_3]))\\=\Pi_3*_3(\Pi_2*_2(\Pi_1*_1[N_1,N_2,N_3])))=[\Pi_1N_1,\Pi_2N_2,\Pi_3N_3].\end{multline*}
By the specification of the nullspace of $\Pi_3$, the columns of all $N_i$ with indices in $[a_1+a_2]$ can be deleted in this last product. In the first product, one can similarly delete the columns of the $M_i$ with indices in $\mathcal S$, due to the specifications of the nullspaces of $\Pi_1$ and $\Pi_2$. Using `$\,\widetilde\ \,$' to denote the deletion of these columns, we have
\begin{equation}[\Pi_1\widetilde M_1,\Pi_2 \widetilde M_2,\Pi_3 \widetilde M_3] =[\Pi_1\widetilde N_1,\Pi_2\widetilde N_2,\Pi_3\widetilde N_3],
\label{eq:Pprod}\end{equation}
where these products involve matrix factors with $r-a_1-a_2=a_3+2$ columns. 

The matrix $\Pi_1\widetilde M_1$ in fact has full column rank. To see this, note that it can also be obtained from
$M_1$ by (a) first deleting columns with indices in $\mathcal S_2$, then (b) multiplying on the left by $\Pi_1$, and finally (c) deleting the columns arising from those in $M_1$ with indices in $\mathcal S_1$. Since  $M_1$ has \krank at least $r-a_1$, step (a) produces a matrix with $r-a_1$ columns, and  full column rank. Since the nullspace of $\Pi_1$ is spanned by certain of the columns of this matrix,  step (b) produces a matrix whose non-zero columns are independent. Step (c) then deletes all zero columns to give a matrix of full column rank.
Similarly, the matrix $\Pi_2\widetilde M_2$ has full column rank.

Noting that $\Pi_3\widetilde M_3$ has no zero columns since $\mathcal Z\subseteq\mathcal S$, we may thus apply Lemma \ref{lem:eig} to the products of equation \eqref{eq:Pprod}. In particular, we find that there is some $\sigma\in \mathfrak S_r$ with $\sigma([r]\smallsetminus \mathcal S)=[r]\smallsetminus[a_1+a_2]$ such that  if $\mathcal I$ is a maximal subset of $[r]\smallsetminus\mathcal S$ with respect to the property that $\langle \{\Pi_3 \mathbf m_i^3 \}_{i\in I}\rangle$ is 1-dimensional, then 
\begin{equation}
\langle \{ \Pi_j \mathbf m^j_i\}_{i\in \mathcal I }\rangle =\langle \{ \Pi_j\mathbf n^j_{\sigma(i)}\}_{i\in \mathcal I}\rangle
\label{eq:Isig}\end{equation} for $j=1,2,3$.

Since we chose $\mathcal S$ to exclude indices of two independent columns of $\Pi_3M_3$, 
there will be such a maximal subset $\mathcal I$ of $[r]\smallsetminus \mathcal S$  that contains at most half the indices. We thus pick such an $\mathcal I$ with
 $|\mathcal I|\le \lfloor (r-a_1-a_2)/2 \rfloor=\lfloor a_3/2\rfloor +1$, and 
consider two cases:

\textbf{Case $a_1=0$:} 
Then $\mathcal S_2=\emptyset$, and $\Pi_2$ has trivial nullspace and thus may be taken to be the identity.
Since $a_3\ge a_2\ge 1$, this implies $|\mathcal I|\le a_3=r-a_2-2$. The sets
$ \{  \mathbf m^2_i\}_{i\in \mathcal I }$ and $ \{ \mathbf n^2_{\sigma(i)}\}_{i\in \mathcal I}$ therefore satisfy the hypotheses of the claim.

\textbf{Case $a_1\ge 1$:}
Note that  $|\mathcal I|+a_2 +1\le \lfloor a_3/2 \rfloor +a_2+2 < a_2+a_3+2=r-a_1$, so for any index $k$, the columns of $M_1$ indexed by $\mathcal I\cup \mathcal S_1\cup \{k\}$ are independent.
This then implies that for $j=1$ the spanning set on the left of equation \eqref{eq:Isig} is independent, so the spanning set on the right is as well. Thus
the set $\{ \mathbf n^1_{\sigma(i)}\}_{i\in \mathcal I}$ is also independent.
Note next that equation \eqref{eq:Isig}
implies that, for $i\in \mathcal I$, there are scalars $b_j^i,c_k^i$ such that
\begin{equation}\mathbf n^1_{\sigma(i)}-\sum_{j\in \mathcal I} b_j^i \mathbf m_j^1=\sum_{k\in \mathcal S_1} c_k^i \mathbf m^1_k.
\label{eq:n1}\end{equation}
Now for any $p\in \mathcal S_1$, $q\in \mathcal S_2$, let 
$$\mathcal S_1'=(\mathcal S_1\smallsetminus \{p\})\cup \{q\},\ \ \mathcal S_2'=(\mathcal S_2\smallsetminus \{q\})\cup \{p\} .$$
Choosing $\Pi_1'$ and $\Pi_2'$ to have nullspaces determined as above by the index sets $\mathcal S_1'$ and  $\mathcal S_2'$, and applying  Lemma \ref{lem:eig} to $[\Pi_1'M_1,\Pi_2'M_2,\Pi_3M_3]= [\Pi_1'N_1,\Pi_2'N_2,\Pi_3N_3]$, similarly shows that for some permutation $\sigma'$ and any $i'\in \mathcal I$ there are scalars $d_k^{i'},f_k^j$ such that
\begin{equation}\mathbf n^1_{\sigma'(i')}-\sum_{j\in \mathcal I} d_j^{i'} \mathbf m_j^1=\sum_{l\in \mathcal S_1'}f_l^{i'} \mathbf m^1_l.
\label{eq:n2}\end{equation} Note that since the same $\Pi_3$ was used, the set $\mathcal I$ is unchanged here, and  $\sigma$ and $\sigma'$ must have the same image on $\mathcal I$.   Picking $i'\in \mathcal I$ so that $\sigma'(i')=\sigma(i)$, and
subtracting equation \eqref{eq:n1} from \eqref{eq:n2}  shows
$$\sum_{j\in \mathcal I} (b_j^i-d_j^{i'}) \mathbf m_j^1= \sum_{k\in \mathcal S_1\smallsetminus\{p\} } (f_k^{i'}-c_k^i) \mathbf m^1_k   +   f_q^{i'} \mathbf m^1_q-c_p^i \mathbf m^1_p.$$
But since the columns of $M_1$ appearing in this equation are independent, we see that $f_q^{i'}=c_p^i=0$. By varying $p$,
we conclude
that $ \mathbf n^1_{\sigma(i)} \in  \langle \{ \mathbf m_i^1 \}_{i\in \mathcal I}\rangle$. 
Thus  $ \langle\{ \mathbf n^1_{\sigma(i)}\}_{i\in \mathcal I}\rangle \subseteq  \langle \{ \mathbf m_i^1 \}_{i\in \mathcal I}\rangle$. 
Since both of these spanning sets are independent, and of the same cardinality, their spans must be equal.
Since $|\mathcal I|\le r-a_1-2$, the set $\mathcal I$ satisfies the hypotheses of the claim.

\end{proof}

\bibliographystyle{amsplain}
\bibliography{phylo}

\end{document}